\newtheorem{teorema}{Theorem}[section]
\newtheorem*{theorem*}{Structure Theorem}
\newtheorem{lemma}[teorema]{Lemma}
\newtheorem{propos}[teorema]{Proposition}
\newtheorem{corol}[teorema]{Corollary}
\theoremstyle{definition}
\newtheorem{rem}{Remark}[section]
\def\R{{\mathbb R}}
\newcommand{\Ci}{\mathcal{C}}
\def\oli{\overline}				
\def\de{\partial}
\def\debar{\oli{\de}}
\title{Weakly complete domains in Grauert type surfaces}
\author[S. Mongodi]{Samuele Mongodi}
\address{Politecnico di Milano, Dipartimento di Matematica, Via Bonardi, 9 -- I-20133 Milano, Italy}
\email{samuele.mongodi@polimi.it}
\subjclass[2010]{32C40, 32E05,32U10}
\begin{document}
\begin{abstract}
The aim of this short note is to investigate the geometry of weakly complete subdomains of Grauert type surfaces, i.e. open connected sets $D$, sitting inside a Grauert type surface $X$, which admit a smooth plurisubharmonic exhaustion function. We prove that they are either modifications of Stein spaces or Grauert type surfaces themselves and we apply these results to the special case of Hopf surfaces.
\end{abstract}

\maketitle

\section{Introduction}

A weakly complete space is a complex space which admits a plurisubharmonic exhaustion function; if the function is strictly plurisubharmonic outside a compact set, then the space is a modification of a Stein space.

Understanding the geometry of weakly complete spaces can be thought as a generalization of the classical Levi problem; in \cite{mst}, Slodkowski, Tomassini and the author classified weakly complete surfaces which admit a real analytic plurisubharmonic exhaustion function. We refer to \cite{crass} for a review of some examples and the main steps of the proof; the result, loosely stated, says that such a complex surface is either holomorphically convex or the regular level sets of the exhaustion are Levi flat hypersurfaces foliated in dense complex curves.

These last instances are called Grauert type surfaces.

The aim of this short note is to investigate the geometry of weakly complete subdomains of Grauert type surfaces, i.e. open connected sets $D$, sitting inside a Grauert type surface $X$, which admit a smooth plurisubharmonic exhaustion function.

Instances of this question can be found for example in the works of Miebach (see \cite{mie}) and Levenberg and Yamaguchi (see \cite{LY}), where the problem is raised for Hopf surfaces.

It turns out that a weakly complete domain in a Grauert type surface can either be a modification of a Stein space or a Grauert type surface itself; in particular, this implies a kind of stability result, namely that, if the ambient space $X$ admits a real analytic plurisubharmonic exhaustion function, then every weakly complete domain $\Omega\subseteq X$ can be endowed with a \emph{real analytic} plurisubharmonic exhaustion function.

\medskip

The proof we present relies on some observations that hold true not only for Grauert type surfaces, but also for holomorphically convex ones; therefore we present our results for any weakly complete surface with a real analytic plurisubharmonic exhaustion function.

\section{The kernel of a weakly complete space}
Let $X$ be a weakly complete complex surface, endowed with a real analytic plurisubharmonic exhaustion function $\alpha:X\to\R$; suppose also that $D\subseteq X$ is a weakly complete domain, with an exhaustion function $\phi:D\to\R$ which is plurisubharmonic and smooth.

Following \cite{ST}, we denote by $\Sigma^1_X$ and $\Sigma^1_D$ their kernels; namely, $\Sigma^1_X$ is the set of points of $X$ where every plurisubharmonic exhaustion function fails to be strictly plurisubharmonic.

By \cite{ST}*{Lemma 3.1}, we can suppose that both $\alpha$ and $\phi$ are minimal for $X$ and $D$ respectively, i.e. that $\Sigma^1_X$ is exactly the set of points where $\alpha$ is not strictly plurisubharmonic and $\Sigma^1_D$ is exactly the set of points where $\phi$ is not strictly plurisubharmonic. Moreover, by \cite{mst}*{Proposition 3.1}, we can assume that both $\alpha$ and $\phi$ are non negative.

The following Lemma collects some properties of kernels and minimal functions that we will need later on.

\begin{lemma}\label{lmm_1}Let $X$, $D$, $\alpha$, $\phi$ be as above. Then
\begin{enumerate}
\item $\Sigma^1_D\subseteq\Sigma^1_X$
\item for every $p\in\Sigma^1_D$ which is a regular point for $\phi$, $d\alpha(p)=\lambda_pd\phi(p)$
\item for every $p\in\Sigma^1_D$, $d\phi(p)$ vanishes on the kernel of $\de\debar\phi(p)$.
\end{enumerate}
\end{lemma}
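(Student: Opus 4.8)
\emph{Proof of (1).} Since $\alpha\ge 0$ on $X$, for every $c\in\R$ the set $\{\phi+\alpha|_D\le c\}$ is contained in $\{\phi\le c\}$, hence is a closed subset of a compact subset of $D$ and therefore compact; thus $\phi+\alpha|_D$ is a smooth plurisubharmonic exhaustion of $D$ (plurisubharmonic as a sum). If $p\in D\setminus\Sigma^1_X$ then $\alpha$, being minimal, is strictly plurisubharmonic at $p$, and hence so is $\phi+\alpha|_D$; therefore $p\notin\Sigma^1_D$. This proves $\Sigma^1_D\subseteq\Sigma^1_X$.

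\emph{Proof of (3).} The statement is vacuous at critical points of $\phi$, so fix $p\in\Sigma^1_D$ with $d\phi(p)\ne 0$. Then $\de\debar\phi(p)$ is degenerate (as $p\in\Sigma^1_D$ and $\phi$ is minimal) and, at such a regular point of a minimal exhaustion, of rank exactly one (part of the local description of $\Sigma^1$ in \cite{ST}); let $L$ be its kernel, a complex line. Suppose, for contradiction, that $d\phi(p)$ — equivalently $\de\phi(p)$, since $L$ is a complex subspace — does not vanish on $L$. Put $\psi:=\phi+\tfrac12\phi^2$; as $t\mapsto t+\tfrac12t^2$ is convex, strictly increasing and proper on $[0,+\infty)\supseteq\phi(D)$, $\psi$ is again a smooth plurisubharmonic exhaustion of $D$, and
\[
\de\debar\psi(p)=(1+\phi(p))\,\de\debar\phi(p)+\de\phi(p)\wedge\debar\phi(p).
\]
Both summands are $\ge 0$; the first has kernel $L$, while $L$ does not lie in the kernel of the second. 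Hence $\de\debar\psi(p)>0$, i.e.\ $\psi$ is strictly plurisubharmonic at $p$, contradicting $p\in\Sigma^1_D$. So $d\phi(p)$ vanishes on $L$.

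\emph{Proof of (2).} By (1), $p\in\Sigma^1_X$. If $d\alpha(p)=0$ we take $\lambda_p=0$, so assume $d\alpha(p)\ne 0$; then $p$ is a regular point of $\alpha$, and the argument of (3) applies to $\alpha$ (its real analyticity being used only through \cite{ST}): $\de\debar\alpha(p)$ has rank one, with kernel a complex line on which $d\alpha(p)$ vanishes. For every $\varepsilon>0$, $\phi+\varepsilon\,\alpha|_D$ is, as in (1), a plurisubharmonic exhaustion of $D$, hence is not strictly plurisubharmonic at $p$; since $\de\debar\phi(p),\de\debar\alpha(p)\ge 0$, this forces $\ker\de\debar\phi(p)\cap\ker\de\debar\alpha(p)\ne\{0\}$, so the two (complex line) kernels coincide; call this line $L$. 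Thus $d\phi(p)$ and $d\alpha(p)$ both annihilate the real $2$-plane $L_\R$. To conclude it suffices to show that $d\phi(p)$ annihilates the whole tangent hyperplane $T_pN$ of the smooth level set $N:=\{\alpha=\alpha(p)\}$: then $\ker d\phi(p)=T_pN=\ker d\alpha(p)$, whence $d\alpha(p)=\lambda_p\,d\phi(p)$. Equivalently, we must show that $p$ is a critical point of $\phi|_N$.

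Here the structure of the minimal exhaustion $\alpha$ enters. By \cite{mst}, near $p$ the (compact) level set $N$ is a real-analytic Levi-flat hypersurface, foliated by complex curves along which $\alpha$ is constant, and by \cite{ST} the leaves through points of $\Sigma^1$ are complete (parabolic — or compact, in the holomorphically convex case). Since $L=\ker\de\debar\phi(p)$, the leaf $\Lambda$ through $p$ has $T_p\Lambda=L$, so $\phi|_\Lambda$ is subharmonic with vanishing Laplacian at $p$; being moreover bounded below by $0$ and, because $\Sigma^1_D$ is saturated (so that the whole of $N$, being the closure of its dense leaves, lies in $D$ and $\phi$ is bounded on it), bounded above, $\phi|_\Lambda$ is constant. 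As the leaves are dense in $N$ (Grauert type case) and $\phi$ is continuous, it follows that $\phi|_N$ is constant, hence $d\phi(p)|_{T_pN}=0$, as wanted; in the holomorphically convex case one argues similarly with the compact curves contracted by the Remmert reduction. \textbf{The main obstacle} is precisely this last step: controlling $\phi$ in the direction transverse, within $N$, to the Levi foliation. What makes it work is the interplay of the minimality of $\alpha$ (which forces its level sets to be Levi-flat near regular points) with the weak completeness of $D$ (which forces the leaves through points of $\Sigma^1_D$ — together with the level sets they fill out — to sit inside $D$, so that $\phi$ is bounded along the parabolic, or compact, leaves and hence constant on them); the elementary parts (1) and (3) do not see this phenomenon.
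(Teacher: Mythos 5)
Parts (1) and (3) of your proof are correct and essentially the paper's own arguments: for (1) you use $\phi+\alpha|_D$ exactly as the paper does, and your computation of $\de\debar(\phi+\tfrac12\phi^2)$ is the same as the paper's identity $\de\debar\psi^2=2(\de\psi\wedge\debar\psi+\psi\,\de\debar\psi)$ applied to $\psi=\phi\geq 0$. (The aside that $\de\debar\phi(p)$ has rank exactly one is not needed; all the argument uses is that the kernel is a nonzero proper subspace.)

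Part (2) is where the problem lies. The first half of your argument is fine and already yields everything the paper's own proof (via $\phi^2+\alpha^2$) yields: $\ker\de\debar\phi(p)=\ker\de\debar\alpha(p)=L$ is a complex line on which, by (3), both $\de\phi(p)$ and $\de\alpha(p)$ vanish, and since the annihilator of a complex line in $(\C^2)^*$ is one-dimensional, $\de\alpha(p)=\mu\,\de\phi(p)$ for some $\mu\in\C^*$. This coincidence of the maximal complex subspaces of $\ker d\phi(p)$ and $\ker d\alpha(p)$ is exactly what is used in the sequel (in Proposition \ref{prp_foglia}, constancy of $\phi$ along a complex curve forces constancy of $\alpha$), so you should stop there. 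Your attempt to upgrade to proportionality of the real differentials by showing that $\phi$ is constant on the whole level set $N=\{\alpha=\alpha(p)\}$ is circular: you assert that ``the whole of $N$ lies in $D$ and $\phi$ is bounded on it,'' but at this stage nothing guarantees that the Levi leaf $\Lambda\subseteq N$ through $p$ --- let alone $N$ itself --- is contained in $D$; that containment is precisely the content of Proposition \ref{prp_foglia}, which the paper deduces \emph{from} Lemma \ref{lmm_1}(2). What \cite{ST}*{Lemma 4.1} gives you is a leaf of $\Sigma^1_D\cap\{\phi=\phi(p)\}$ inside $D$, not the full ambient leaf $\Lambda$, so the parabolicity/boundedness argument has no domain to run on. The correct resolution of the ``obstacle'' you identify is to prove and use only the complex proportionality $\de\alpha(p)=\mu\,\de\phi(p)$, which your $\phi+\varepsilon\alpha$ computation together with (3) already establishes.
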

\begin{proof}Take a point $p\in D$; if $p\not\in\Sigma^1_X$, then the Levi form of $\alpha$ is positive definite at $p$, so $\alpha+\phi$ is a plurisubharmonic exhaustion function for $D$ which is strictly plurisubharmonic at $p$, hence $p\not\in\Sigma^1_D$. This proves (1).

Now, observe that, if $\psi$ is a non negative plurisubharmonic function and $q$ is a regular point for $\psi$, then
$$\de\debar\psi^2=2(\de\psi\wedge\debar\psi + \psi\de\debar\psi)$$
is again positive semidefinite and the kernel of the associated linear map is given by the intersection of the kernel of $\de\debar\phi$ and the kernel of $\de\psi$ (or, as $\psi$ is real valued, of $d\psi$. Therefore, if $p\in \Sigma^1_D\subseteq\Sigma^1_X$, then both $\phi$ and $\phi^2$ must both have degenerate Levi forms at $p$, hence $d\phi$ has to vanish on the kernel of $\de\debar\phi$. So (3) is proved.

Now, consider $\phi^2+\alpha^2$. In order for its Levi form to be degenerate at $p$, the kernels of $d\phi$ and $d\alpha$ and the kernels of $\de\debar\phi$ and $\de\debar\alpha$ must all coincide; hence (2) follows.
\end{proof}

From the Main Theorem in \cite{mst}, in particular it follows that, for every $p\in\Sigma^1_X$, regular point for $\alpha$, there exist an irreducible complex curve $F_p$ and an injective holomorphic map $i_p:F_p\to X$ such that $p\in C_p=i(F_p)\subseteq \Sigma^1_X\cap\{\alpha=\alpha(p)\}$. Moreover, $T_pC_p$, if defined, is the kernel of $\de\debar\alpha(p)$.

\begin{propos}\label{prp_foglia}Consider $p\in\Sigma^1_D$ and suppose that $\phi(p)$ is a regular value for $\phi$. Then $C_p\subseteq\Sigma^1_D$.\end{propos}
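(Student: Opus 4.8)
The plan is to restrict $\phi$ to the curve $C_p$ and play subharmonicity against the structure of $\Sigma^1_X$ provided by the Structure Theorem of \cite{mst}.

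First I would fix the local picture at $p$. Since $\phi(p)$ is a regular value, $p$ is a regular point of $\phi$; by Lemma \ref{lmm_1}(1), $p\in\Sigma^1_D\subseteq\Sigma^1_X$, and Lemma \ref{lmm_1}(2) gives $d\alpha(p)=\lambda_pd\phi(p)$. One first checks $\lambda_p\neq0$, so that $p$ is also a regular point of $\alpha$ and the curve $C_p$ with $T_pC_p=\ker\de\debar\alpha(p)$ is available; if $\lambda_p=0$ then $d\alpha(p)=0$, and this case is excluded (or reduced to the previous one) by the properties of minimal real analytic exhaustions on $\Sigma^1_X$ from \cite{mst}. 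The proof of Lemma \ref{lmm_1}(2) also yields $\ker\de\debar\phi(p)=\ker\de\debar\alpha(p)=T_pC_p$, while Lemma \ref{lmm_1}(3) says that $d\phi(p)$ vanishes on $T_pC_p$; thus $\phi$ restricted to $C_p$ has a critical point at $p$ at which its complex Hessian vanishes as well.

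Now set $g:=\phi\circ i_p$, a smooth subharmonic function on the open subset $U:=i_p^{-1}(D)$ of the Riemann surface $F_p$; since $\phi$ exhausts $D$, $g$ exhausts $U$. The aim is to prove that $g$ is constant: then $\phi$ is constant along $C_p$, so at every $q\in C_p$ the form $\de\debar\phi(q)$ annihilates $T_qC_p$ and is therefore not positive definite, which is exactly the assertion $C_p\subseteq\Sigma^1_D$ (and in particular $C_p\subseteq D$). If $X$ is holomorphically convex then $F_p$, hence $C_p$, is compact, so once $C_p\subseteq D$ is known $g$ is subharmonic on a compact Riemann surface and thus constant. If $X$ is of Grauert type then $C_p$ is dense in the compact level hypersurface $L=\{\alpha=\alpha(p)\}$; provided $L\subseteq D$, the continuous function $\phi|_L$ attains its maximum along some leaf, where it is then constant by the maximum principle for subharmonic functions, and by density of $C_p$ in $L$ we get that $\phi|_L$, hence $\phi|_{C_p}$, is constant.

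What remains — and what I expect to be the main obstacle — is to show that $C_p$ (resp. $L$) does not reach $\partial D$, i.e. that $C_p\cap D=C_p$. This is the step that uses weak completeness of $D$ quantitatively: if some sequence in $C_p\cap D$ converged to a point of $\partial D$, then $g$ would be an unbounded subharmonic exhaustion of a proper subdomain of the (parabolic, resp. compact) leaf, and one has to contradict this using the critical-point information at $p$ from Lemma \ref{lmm_1}(3), the density of $C_p$ in the compact hypersurface $\{\alpha=\alpha(p)\}$, and the continuity of $\phi$ up to that hypersurface. Everything up to this point is a bookkeeping assembly of Lemma \ref{lmm_1} with the Structure Theorem of \cite{mst} and the local description of minimal kernels from \cite{ST}; closing up the leaf inside $D$ is the genuinely analytic step.
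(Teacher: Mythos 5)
Your reduction---once $\phi$ is known to be constant along $C_p$ with $C_p\subseteq D$, minimality of $\phi$ forces $C_p\subseteq\Sigma^1_D$---is fine, but the step you defer is precisely the content of the proposition, and the route you sketch for it does not work. The claim that $g=\phi\circ i_p$ exhausts $U=i_p^{-1}(D)$ is false in general: $i_p$ is only an injective immersion, not a proper map, and in the Grauert type case $C_p$ is dense in a real $3$-dimensional Levi flat hypersurface, so the $i_p$-preimage of a compact sublevel set $\{\phi\le c\}$ need not be compact in $F_p$; hence the ``unbounded subharmonic exhaustion'' contradiction you envisage is not available. Likewise, your maximum-principle argument on $L=\{\alpha=\alpha(p)\}$ presupposes $L\subseteq D$, which is not a hypothesis here: in the paper that inclusion is a \emph{consequence}, obtained in the proof of Theorem \ref{teo_teo} only after the proposition is in place. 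Note also that Lemma \ref{lmm_1} gives information only at points already known to lie in $\Sigma^1_D$, so from the data at the single point $p$ you cannot even conclude that $\phi|_{C_p}$ is locally constant near $p$, let alone that $C_p$ does not escape to $bD$. So the ``genuinely analytic step'' you isolate is a genuine gap, not bookkeeping.

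The missing tool is \cite{ST}*{Lemma 4.1}: since $\phi$ is minimal and $\phi(p)$ is a regular value, $\Sigma^1_D\cap\{\phi=\phi(p)\}$ is itself foliated in complex curves. This hands you, from the outset, a complex curve $S_p$ through $p$ which is contained in $\Sigma^1_D$ (hence in $D$) and in the compact level set $\{\phi=\phi(p)\}$, so the boundary issue never arises. Lemma \ref{lmm_1}(2) then shows that $\alpha$ is constant along $S_p$ because $\phi$ is, so $S_p$ sits inside $\Sigma^1_X\cap\{\alpha=\alpha(p)\}$; by the Main Theorem of \cite{mst} the only complex curves in that set are the leaves (or the finitely many compact curves), whence $S_p=C_p$ and therefore $C_p\subseteq\Sigma^1_D$. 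In short, the paper argues inside the kernel of $D$ with its own foliation rather than restricting $\phi$ to the ambient leaf $C_p$, and that is exactly what removes the obstacle your proposal leaves open.
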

\begin{proof}By \cite{ST}*{Lemma 4.1}, $\Sigma^1_D\cap\{\phi=\phi(p)\}$ is foliated in complex curves; let $S_p$ be the leaf passing through $p$. By Lemma \ref{lmm_1}-(2), as $\phi$ is constant along $S_p$, so it is $\alpha$. Therefore, the immersed complex curve $S_p$ sits in $\Sigma^1_D\cap\{\phi=\phi(p)\}$ as well as in $\Sigma^1_X\cap\{\alpha=\alpha(p)\}$. But the latter, by the Main Theorem in \cite{mst} is either union of a finite number of irreducible compact curves or it is a real $3$-dimensional space foliated with complex leaves, which are then the only complex curves in it. Therefore $S_p=C_p$.\end{proof}

Putting these observations together with the geometric structure of $X$ given by the Main Theorem in \cite{mst}, we obtain the result we claimed.

\begin{teorema}\label{teo_teo}Let $X$ be a weakly complete complex surface endowed with a real analytic plurisubharmonic exhaustion function $\alpha:X\to\R$; consider $D\subseteq X$ a domain with a $\Ci^\infty$ plurisubharmonic exhaustion function $\phi:D\to\R$. Then, one of the following is true:
\begin{enumerate}
\item  $D$ is a modification of a Stein space
\item both $D$ and $X$ are union of complex curves
\item both $D$ and $X$ are Grauert type surfaces.
\end{enumerate}\end{teorema}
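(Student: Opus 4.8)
The plan is to read off the trichotomy from the shape of the kernel $\Sigma^1_D$, using the classification of $X$ provided by the Main Theorem of \cite{mst}. That theorem puts $X$ in exactly one of three classes: (a) a modification of a Stein space, and then $\Sigma^1_X$ is a thin analytic subset of $X$ (the union of the compact curves contracted by the Remmert reduction); (b) a holomorphically convex surface whose Remmert reduction $X\to B_X$ is one dimensional, i.e.\ a surface all of whose points lie on compact complex curves, and then $\Sigma^1_X=X$; (c) a Grauert type surface, again with $\Sigma^1_X=X$, whose regular $\alpha$-level sets are Levi flat with dense complex leaves. I will also use repeatedly that every compact complex curve contained in $D$ lies in $\Sigma^1_D$, since a plurisubharmonic function is constant on it.

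First I would treat the case in which $\Sigma^1_D$ is \emph{thin}, meaning that for every regular value $c$ of $\phi$ the slice $\Sigma^1_D\cap\{\phi=c\}$ is a locally finite union of compact complex curves. By Sard's theorem this forces $\Sigma^1_D$ to have empty interior, so $\phi$ is strictly plurisubharmonic on the dense open set $D\setminus\Sigma^1_D$; combining \cite{ST}*{Lemma 4.1}, Proposition \ref{prp_foglia} and Lemma \ref{lmm_1}-(3), one checks that $\Sigma^1_D$ is a closed analytic subset of $D$ all of whose positive dimensional components are compact curves, each of them a fibre of the structure map of $X$ and hence contractible inside $D$. Contracting all such curves produces a Stein space and exhibits $D$ as a modification of a Stein space — case (1). (Here $X$ may be of any of the three types; for instance $D$ could be a Stein ball sitting inside a Grauert type surface.)

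It remains to treat the case in which for some regular value $c_0$ the slice $\Sigma^1_D\cap\{\phi=c_0\}$ is \emph{not} a locally finite union of compact curves. By Proposition \ref{prp_foglia} this slice is saturated by leaves $C_p\subseteq\Sigma^1_X\cap\{\alpha=\alpha(p)\}$, so $\Sigma^1_X\cap\{\alpha=\alpha(p)\}$ is not the ``finitely many compact curves'' alternative of \cite{mst}: hence $X$ is of type (b) or (c) and $\Sigma^1_X=X$. The crux of the proof — and the step I expect to be the main obstacle — is to upgrade this \emph{local} information to the \emph{global} equality $\Sigma^1_D=D$; otherwise some regular level set of $\phi$ would be strictly pseudoconvex along part of itself and Levi flat along another part, which is incompatible with all three conclusions of the theorem. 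I would prove $\Sigma^1_D=D$ by restricting $\phi$ to the complex curves that foliate $\{\phi=c_0\}$ near the non thin piece — on each of them $\phi$ is constant — and then using the exhaustion property of $\phi$, together with the density (resp.\ compactness) of the corresponding leaves inside $X$, to propagate the degeneracy of \emph{every} plurisubharmonic exhaustion of $D$ from one level set to the neighbouring ones and throughout $D$, in the spirit of the propagation arguments of \cite{mst} and \cite{ST}. Once $\Sigma^1_D=D$ is known, every regular level set of $\phi$ is a union of the leaves $C_p$, and these are precisely the curves coming from the structure of $X$: if $X$ is of type (b) they are compact, they are the fibres of a proper holomorphic map of $D$ onto a Riemann surface, and $D$ is itself a union of complex curves — case (2); if $X$ is of type (c) they are dense in the Levi flat slices, so the regular level sets of $\phi$ are Levi flat with dense leaves and $D$ is a Grauert type surface — case (3). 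In either case $D$ and $X$ belong to the same class, which is the assertion.
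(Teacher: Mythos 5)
Your case split and both of its halves diverge from what can actually be proved. In your ``thin'' case the decisive step fails: the compact curves you propose to contract are, as you yourself identify them, fibres of the structure map of $X$ (or leaves/closures of leaves of the Levi flat levels), and a fibre of a fibration onto a curve has self-intersection $0$, so by Grauert's contractibility criterion it is never exceptional and cannot be blown down. In fact, as soon as $D$ contains such a fibre one is forced into alternatives (2) or (3), not (1) --- that is exactly the paper's ``first case''. Moreover, nothing you cite gives that $\Sigma^1_D$ is a closed analytic subset of $D$: Lemma \ref{lmm_1}, Proposition \ref{prp_foglia} and \cite{ST}*{Lemma 4.1} only control its intersection with regular level sets. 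The paper reaches alternative (1) by the complementary, much simpler situation: if every level set of $\phi$ through a point of $\Sigma^1_D$ is singular, Sard's theorem gives regular values $c_n\to+\infty$ whose level sets miss $\Sigma^1_D$, hence (by minimality of $\phi$) are strongly pseudoconvex, and a standard argument then shows $D$ is a modification of a Stein space; no structure of $\Sigma^1_D$ is needed at all.

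In the other case, the step you call the crux --- propagating the degeneracy to get $\Sigma^1_D=D$ ``in the spirit of the propagation arguments'' --- is precisely what has to be proved, and it is not supplied; note also that even with $\Sigma^1_D=D$ in hand you could not invoke the classification of \cite{mst} for $D$, since $D$ is only known to carry a \emph{smooth} exhaustion, while that theorem requires a real analytic one (producing such structure on $D$ is the very content of the statement). The paper's mechanism is concrete and different: from the structure of $X$ one has a proper map with connected fibres $F:X^*\to T$ (holomorphic onto a Riemann surface when $X$ is fibred in compact curves; pluriharmonic onto an interval in the Grauert case, after removing $M=\min\alpha$ or passing to a double cover, with connectedness of fibres from \cite{mst2}). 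One sets $\Omega=\{x\in T:\ F^{-1}(x)\subseteq D\}$, nonempty by Proposition \ref{prp_foglia}, and shows $\Omega$ is open and closed in $V=F(D)$: plurisubharmonic functions are constant on the fibres (compact curves, or Levi flat hypersurfaces with dense leaves), so $\phi$ is constant on any fibre meeting $D$ that is a limit of fibres contained in $D$, and since $\phi$ is an exhaustion and the fibres are compact and connected this forces the whole fibre into $D$ (closedness), while compactness of a fibre inside $D$ gives openness. Hence $D=F^{-1}(V)$ is saturated and inherits the structure of $X$, yielding (2) or (3). Your sketch points at the right picture but omits exactly this open--closed saturation argument, which is the heart of the proof.
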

\begin{proof} We separate two cases.

\medskip

\noindent{\bf First case}

If we find a point $p\in\Sigma^1_D$ and a minimal function $\psi$ for $D$ such that the level set
$$Z_{\psi}(p)=\{q\in D\ :\ \psi(q)=\psi(p)\}$$
is regular for $\psi$, then, by Proposition \ref{prp_foglia}, $C_p\subseteq\Sigma^1_D\subseteq D$.

Obviously, also the closure of $C_p$ will be contained in $D$; in the case of Grauert type surfaces, this implies that $D$ contains the connected component of the level set of $\alpha$ which contains $p$.

\medskip

Now, if $X$ is fibered in compact curves, we have a (holomorphic) proper map with connected fibers $F:X\to T$, where $T$ is an open connected Riemann surface.
If $X$ is a surface of Grauert type with $M=\min\alpha$ of real dimension $2$, then there is a (pluriharmonic) proper map \cite{mst}*{Main Theorem} with connected fibers \cite{mst2}*{Theorem 3.1} $F:X\setminus M\to T$, where $T$ is the real line.
If $M$ has real dimension $3$, up to passing to a double cover $X^*$, we find a (pluriharmonic) proper map with connected fibers $F:X^*\to T$, where $T$ is an open connected subset of the real line.

\medskip

In all these three scenarios, we have a proper map with connected fibers $F:X^*\to T$, where $X^*$ is either $X$, $X\setminus M$ or a double cover of $X$ and $T$ is either an open connected Riemann surface or an open connected subset of the real line.

We define
$$\Omega=\{x\in T\ :\ F^{-1}(x)\subseteq D\}$$
and, by the first part of this proof, we know that $\Omega\neq\emptyset$; we also set $V=F(D)$. We know that $V$ is open and connected.

Consider a sequence $\{x_n\}\subset \Omega$, converging to a point $x\in V$. If the fibers of $F$ are irreducible compact complex curves, then obviously any plurisubharmonic function is constant on them. If the fibers of $F$ are smooth Levi-flat hypersufaces with dense complex leaves, then again any plurisubharmonic functio is constant on them (see the end of the proof of \cite[Corollary 3.8]{mst}); $\alpha$ being real analytic and proper, its critical values cannot accumulate, so, up to passing to a subsequence, we can suppose that $F^{-1}(x_n)$ is always contained in a smooth Levi-flat hypersurface. So, in both cases, as $F^{-1}(x_n)$ is contained in $D$, then $\phi$ is constant on it, for every $n$.

For every $p\in F^{-1}(x)\cap D$ (which is nonempty), there is a sequence of points $p_n$ such that $p_n\in F^{-1}(x_n)$, $p_n\to p$. This implies that $\phi$ is constant on $F^{-1}(x)\cap D$, but then $F^{-1}(x)\subset D$, because $\phi$ is an exhaustion function and the fibers of $F$ are compact and connected. Hence $\Omega$ is closed in $V$.

On the other hand, if $x\in \Omega$, then $\phi$ is constant on $F^{-1}(x)$, which is then a compact subset of $D$. By an easy topological argument, there exists a neighbourhood $U$ of $x\in T$ such that $F^{-1}(U)\subseteq D$, i.e. $U\subseteq\Omega$, i.e. $\Omega$ is open in $V$.

Therefore, $\Omega$ is both open and closed in $V$, which is connected; so $\Omega=V$, i.e. $D=F^{-1}(V)$.

\medskip

\noindent{\bf Second case}

Suppose that for every point $p\in \Sigma^1_D$, the level set $Z_\phi(p)$ is singular. By Sard's theorem, there exists a sequence $\{c_n\}_{n\in\mathbb{N}}\subseteq \R$ of regular values of $\phi$ such that $c_n\to+\infty$, hence there exists a sequence of regular level sets $Z_\phi(p_n)$ with $p_n\to bD$.

By hypothesis, none of these level sets intersects $\Sigma^1_D$, so, $\phi$ being minimal, all these levels are strongly pseudoconvex. By a standard argument, this implies that $D$ is a modification of a Stein space.
\end{proof}
	
\begin{rem}All the contents of this section, apart from the second case in the previous theorem, hold when $\phi$ is supposed to be only $\Ci^2$; however, in order to apply Sard's theorem, we need $\phi$ to be at least $\Ci^4$.\end{rem}

\section{Applications}

One special case of weakly complete surfaces with a real analytic plurisubharmonic exhaustion function are Hopf surfaces with one compact curve removed (see \cite{mst}*{Section 2} for a worked example). Applying Theorem \ref{teo_teo} to this particular case, we obtain a different proof for the Main Theorem in \cite{mie} and  Theroem 1.1. in \cite{LY}, under the hypothesis of the existence of a smooth plurisubharmonic exhaustion.

\medskip

More generally, suppose $X$ is the total space of a topologically trivial line bundle over a compact Riemann surface $M$; if the class in $H^1(M, \mathcal{S}^1)$ associated to the bundle is unipotent, then $X$ is fibered in compact curves, otherwise it is a Grauert type surface (see \cite{mst}*{Section 2} and \cite{mst2}*{Section 4} for related discussions and examples).

In both cases, we have a minimal plurisubharmonic whose only critical level is the minimum level, coinciding with $M$; so, any open set containing $M$ will also contain a regular level set of the exhaustion function.

Combining this observations with Theorem \ref{teo_teo}, we get the following

\begin{corol}Suppose $X$ is the total space of a topologically trivial line bundle $\xi$ over a compact Riemann surface $M$ and let $D\subset X$ be a weakly complete domain with a smooth plurisubharmonic exhaustion. Then one of the following happens:
\begin{enumerate}
\item $D$ is Stein
\item both $D$ and $X$ are fibered in complex curves
\item both $D$ and $X$ are Grauert type surfaces.
\end{enumerate}
\end{corol}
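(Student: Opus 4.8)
The plan is to deduce the Corollary directly from Theorem \ref{teo_teo} by verifying its hypotheses for the special ambient space $X$ at hand, and then upgrading conclusion (1) from "modification of a Stein space" to "Stein". First I would recall, as stated in the preamble to the Corollary, the dichotomy for the total space of a topologically trivial line bundle $\xi$ over a compact Riemann surface $M$: according to \cite{mst}*{Section 2} and \cite{mst2}*{Section 4}, the class of $\xi$ in $H^1(M,\mathcal{S}^1)$ is either unipotent, in which case $X$ is holomorphically convex and fibered in compact curves, or it is not, in which case $X$ is a Grauert type surface. In either case $X$ carries a real analytic plurisubharmonic exhaustion function $\alpha$ whose unique critical level is the minimum, equal to the zero section $M$; this is exactly the input Theorem \ref{teo_teo} needs, and it also guarantees that $\Sigma^1_X$ is nonempty and contained in $M\cup(\text{nothing else relevant})$, so the "first case" of the proof of Theorem \ref{teo_teo} can be triggered whenever $\Sigma^1_D\neq\emptyset$.

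Next I would run the trichotomy of Theorem \ref{teo_teo}. If $\Sigma^1_D=\emptyset$, then the minimal exhaustion $\phi$ of $D$ is strictly plurisubharmonic everywhere and $D$ is Stein outright, giving conclusion (1) in its strong form. If $\Sigma^1_D\neq\emptyset$, pick $p\in\Sigma^1_D$; by Lemma \ref{lmm_1}-(1), $p\in\Sigma^1_X$, so $p$ lies on the minimum level $M$ (since that is the only critical level of $\alpha$). The key geometric observation is the one highlighted just before the Corollary: any open subset of $X$ meeting $M$ — in particular $D$, and in particular any level set $Z_\phi(p')$ for $p'$ near $p$ — automatically contains a \emph{regular} level set of $\alpha$, hence we are never forced into the "second case" of Theorem \ref{teo_teo}; more precisely, there is $p'\in\Sigma^1_D$ together with a minimal function (namely $\phi$, or $\phi$ slightly perturbed) for which $Z_\phi(p')$ is regular. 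Thus the "first case" of the proof of Theorem \ref{teo_teo} applies, yielding $D=F^{-1}(V)$ for the relevant proper fibration $F\colon X^{*}\to T$, and therefore $D$ inherits the structure of $X$: fibered in compact curves if $\xi$ is unipotent (conclusion (2)), or a Grauert type surface otherwise (conclusion (3)).

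The only point requiring a small extra argument is why, in the present setting, conclusion (1) of Theorem \ref{teo_teo} can be strengthened from "modification of a Stein space" to "$D$ is Stein". Here I would argue that conclusion (1) of Theorem \ref{teo_teo} arises precisely from the "second case" of its proof, i.e. when every $Z_\phi(p)$ with $p\in\Sigma^1_D$ is singular; but we have just seen that this situation cannot occur once $\Sigma^1_D\neq\emptyset$, because $D\supseteq$ a neighbourhood of a point of $M$ forces the existence of regular critical-free levels accumulating at $bD$. Hence conclusion (1) can only occur when $\Sigma^1_D=\emptyset$, and then $D$ admits a strictly plurisubharmonic exhaustion, so by Grauert's solution of the Levi problem $D$ is Stein — no exceptional set to blow down, no modification needed.

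The main obstacle I anticipate is the bookkeeping around the "regular level set" hypothesis: one must be careful that the point $p'\in\Sigma^1_D$ whose $\phi$-level is regular actually exists, rather than merely that $D$ contains \emph{some} regular level of $\alpha$. This is handled by noting that $\Sigma^1_D\subseteq\Sigma^1_X=M$, that $M$ is the unique critical level of $\alpha$ and is itself contained in $D$ together with an open neighbourhood, and that the level sets of $\phi$ near such a neighbourhood are generically regular by Sard; combined with Lemma \ref{lmm_1}-(2), which forces $\alpha$ to be constant on the leaves through $\Sigma^1_D$, this pins us into the "first case" and makes the reduction to Theorem \ref{teo_teo} clean. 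Everything else is a direct invocation of results already established, so modulo this verification the Corollary follows formally.
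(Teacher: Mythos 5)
Your argument hinges on the identification $\Sigma^1_X=M$ (you write that $\Sigma^1_X$ is ``contained in $M$'' and later ``$\Sigma^1_D\subseteq\Sigma^1_X=M$''), and this is where the proof breaks. The minimal kernel $\Sigma^1_X$ is the set where every plurisubharmonic exhaustion fails to be \emph{strictly} plurisubharmonic; it is not the critical set of $\alpha$. For the surfaces in the Corollary this set is essentially all of $X$: if $\xi$ is unipotent, every point lies on a compact fiber and every plurisubharmonic function is constant along it, so the Levi form of any exhaustion degenerates everywhere; if $X$ is of Grauert type, the regular levels of $\alpha$ are Levi flat and foliated by (dense) complex curves, so again no exhaustion is strictly plurisubharmonic there. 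Hence $p\in\Sigma^1_D$ gives no information about $p$ lying on the zero section $M$, the deduction that $D$ contains (a neighbourhood of a point of) $M$ collapses, and with it your claim that the ``second case'' of the proof of Theorem \ref{teo_teo} cannot occur when $\Sigma^1_D\neq\emptyset$. In other words, you have not excluded the scenario in which $\Sigma^1_D\neq\emptyset$ but every level $Z_\phi(p)$, $p\in\Sigma^1_D$, is singular, which is exactly the scenario producing conclusion (1) of Theorem \ref{teo_teo} in the weak form ``modification of a Stein space''; so the upgrade to ``$D$ is Stein'' is not established. (The part that does work is the easy half: if $\Sigma^1_D=\emptyset$ then $D$ is Stein by Grauert.)

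The paper closes this gap by a different and more robust argument, which does not try to force the ``first case'': it accepts from Theorem \ref{teo_teo} that, if (2) and (3) fail, $D$ is a modification of a Stein space, and then observes that a non-Stein such modification must contain a compact curve. The compact curves available in $X$ are then analyzed: in the Grauert type case the only compact curve is $M$, which admits no Stein neighbourhood (it has self-intersection zero, being the zero section of a topologically trivial bundle, so it cannot be exceptional); in the fibered case, as soon as $D$ contains one compact fiber, the connectedness argument from the end of the first case of Theorem \ref{teo_teo} shows $D$ is a union of fibers, i.e.\ case (2), a contradiction. If you want to keep your strategy, you would have to replace the false statement $\Sigma^1_X=M$ by an argument of this kind; as written, the key step is unsupported.
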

\begin{proof} From Theorem \ref{teo_teo} we know that, if cases (2) and (3) don't apply, then $D$ is a modification of a Stein space. However, if $D$ were not Stein, then it would contain some compact curves. If $X$ is a surface of Grauert type, the only compact curve in it is $M$, which doesn't have any Stein neighbourhood; on the other hand, if $X$ is fibered in complex curves, as soon as $D$ contains one of them, it is a union of curves, repeating the argument in the end of the first case in the proof of Theorem \ref{teo_teo}.

Therefore $D$ is Stein.
\end{proof}

\begin{rem}The proof we have just given applies whenever $X$ is a weakly complete surface which admits a real analytic plurisubharmonic exhaustion function whose only critical level set is the minimum level set.\end{rem}

\begin{bibdiv}
\begin{biblist}
\bib{LY}{article}{
author = {Levenberg, Norman},
author={Yamaguchi, Hiroshi},
doi = {10.2969/jmsj/06710231},
journal = {J. Math. Soc. Japan},
number = {1},
pages = {231--273},
publisher = {Mathematical Society of Japan},
title = {Pseudoconvex domains in the Hopf surface},
volume = {67},
year = {2015}
}

\bib{mie}{article}{
Title = {Pseudoconvex non-{S}tein domains in primary {H}opf surfaces},
Journal = {Izv. Ross. Akad. Nauk Ser. Mat.},
Author = {Miebach, C.},
Number = {5},
Volume = {78},
Year = {2014},
Pages = {191--200},
doi={10.1070/IM2014v078n05ABEH002717},
}

\bib{crass}{article}{
author={Mongodi, S.},
   author={Slodkowski, Z.},
   author={Tomassini, G.},
title = {On weakly complete surfaces},
journal = {Comptes Rendus Mathematique},
volume = {353},
number = {11},
pages = {969 -- 972},
year = {2015},
note = {},
issn = {1631-073X},
doi = {10.1016/j.crma.2015.08.009},

}

   \bib{mst}{article}{
   author={Mongodi, S.},
   author={Slodkowski, Z.},
   author={Tomassini, G.},
   title={Weakly complete surfaces},
   journal={Indiana U. Math. J.},
   date={2016},
   note={to appear}
   }
	
	\bib{mst2}{article}{
	   author={Mongodi, S.},
   author={Slodkowski, Z.},
   author={Tomassini, G.},
   title={Some properties of Grauert type surfaces},
	date={2017},
	journal={International Journal of Mathematics},
	note={to appear}}

\bib{ST}{article}{
    author={Slodkowski, Zibgniew},
   author={Tomassini, Giuseppe},
   title={Minimal kernels of weakly complete spaces},
   journal={J. Funct. Anal.},
   volume={210},
   date={2004},
   number={1},
   pages={125--147},
   issn={0022-1236},
   doi={10.1016/S0022-1236(03)00182-4},
}
  \end{biblist}
\end{bibdiv}

\end{document}